\theoremstyle{plain}
\newtheorem{theorem}{Theorem}[section]
\newtheorem{lemma}[theorem]{Lemma}
\theoremstyle{definition}
\numberwithin{equation}{section}
\newcommand{\AlignFootnote}[1]{%
    \ifmeasuring@
    \else
        \footnote{#1}%
    \fi
}
\title{On conciseness of the word in Olshanskii's example}
\author[M. Pintonello]{Matteo Pintonello} 
\address{Matteo Pintonello: Department of Mathematics, Euskal Herriko Unibertsitatea UPV/EHU, 48940 Leioa, Spain}
\email{matteo.pintonello@ehu.eus}
\author[P. Shumyatsky]{Pavel Shumyatsky} 
\address{Pavel Shumyatsky: Department of Mathematics, University of Brasilia, Brasilia DF, Brazil}
\email{pavel@unb.br}
\date{\today}
\thanks{This paper was written while the second author was visiting Department of Mathematics of the University of the Basque Country. He expresses his sincere gratitude to the department for excellent hospitality.
The first author was supported by the Spanish Government project PID2020-117281GB-I00, partially by FEDER funds, by the Basque Government project IT483-22, and grant FPI-2018 of the Spanish Government. He is extremely thankful to G. Fern\'andez-Alcober and M. Casals for the useful suggestions.
}
\keywords{Conciseness, strong conciseness, profinite groups, word problems}
\subjclass[2010]{Primary 20F10}
\begin{document}

\maketitle

\begin{abstract}
A group-word $w$ is called concise if the verbal subgroup $w(G)$ is finite whenever $w$ takes only finitely many values in a group $G$. It is known that there are words that are not concise. In particular, Olshanskii gave an example of such a word, which we denote by $w_o$. The problem whether every word is concise in the class of residually finite groups remains wide open. In this note we observe that $w_o$ is concise in residually finite groups. Moreover, we show that $w_o$ is strongly concise in profinite groups, that is, $w_o(G)$ is finite whenever $G$ is a profinite group in which $w_o$ takes less than $2^{\aleph_0}$ values.
\end{abstract}

\section{Introduction}

A group-word $w$ is called concise in the class of groups $\mathcal C$ if the verbal subgroup $w(G)$ is finite whenever $w$ takes only finitely many values in a group $G\in\mathcal C$. In the sixties Hall raised the problem whether every word is concise in the class of all groups but in 1989 S. Ivanov \cite{Iva89} solved the problem in the negative. More precisely, he showed that if $n\geq 10^{10}$ is an odd integer and $p\geq 5000$ is a prime, there is a torsion-free group in which the word $[[x^{pn},y^{pn}]^n,y^{pn}]^n$ takes only two values.

Another noteworthy word was introduced by Olshanskii in \cite{Ols85}. For positive integers $d$ and $n$, set

$$v(x,y)=[[x^d,y^d]^d,[y^d,x^{-d}]^d]$$
and
$$w_o(x,y)=[x,y]v(x,y)^n[x,y]^{\varepsilon_1}v(x,y)^{n+1}\cdots [x,y]^{\varepsilon_{h-1}}v(x,y)^{n+h-1},$$
where 
$$\varepsilon_{10k+1}=\varepsilon_{10k+2}=\varepsilon_{10k+3}=\varepsilon_{10k+5}=\varepsilon_{10k+6}=1$$
$$\varepsilon_{10k+4}=\varepsilon_{10k+7}=\varepsilon_{10k+8}=\varepsilon_{10k+9}=\varepsilon_{10k+10}=-1$$
for $k=0,1,\ldots,(h-1)/10$ and $h\equiv 1 \mod 10$, $h>50000$.

Olshanskii showed that the parameters $n$ and $d$ can be chosen in such a way that the word $w_o$ has several remarkable properties.   In particular, the variety of groups where $w_o$ is a law contains infinite non-abelian groups while all finite groups in the variety are abelian. Moreover, 
the word is not concise in the class of all groups (see \cite[p.\ 439]{Ols91B}).

From now on, throughout the paper $w_o$ stands for Olshanskii's word with the above properties.

The problem whether all words are concise in residually finite groups remains wide open (cf. Segal \cite[p.\ 15]{Seg09} or Jaikin-Zapirain \cite{Jai08}). In recent years several new positive results with respect to this problem were obtained (see \cite{AcSh14, GuSh15, FAPi23, FASh18, DeMoShEng1, DeMoShEng2, DeMoSh19}). In the present paper we observe that the word $w_o$ is concise in residually finite groups (Theorem \ref{main1}).

A natural variation of the notion of conciseness for profinite groups was introduced in \cite{DeKlSh20}: the word $w$ is strongly concise in a class of profinite groups $\mathcal{C}$ if the verbal subgroup $w(G)$ is finite in any group $G \in \mathcal{C}$ in which $w$ takes less than $2^{\aleph_0}$ values. Here and throughout the paper, whenever $G$ is a profinite group we write $w(G)$ to denote the {\it closed} subgroup generated by $w$-values. A number of new results on strong conciseness of group-words can be found in \cite{DeKlSh20,Det23,AzSh21,KhSh23,HPS23}. In this note we will show that the word $w_o$ is strongly concise in profinite groups (Theorem \ref{main2}).

\section{Preliminaries}

By a subgroup of a profinite group we always mean a closed subgroup and by a homomorphism of profinite groups a continuous homomorphism.

\begin{lemma}\cite[Lemma 4]{DeMoShEng1}\label{fromm} Let $w$ be a word and $G$ a group such that the set of  $w$-values in $G$ is finite with at most $m$ elements. Then the order of $w(G)'$ is $m$-bounded.
\end{lemma}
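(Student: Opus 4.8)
The plan is to exploit the fact that the set $S$ of $w$-values in $G$ is closed under conjugation: if $s=w(g_1,\dots,g_k)$ is a $w$-value, then for every $h\in G$ we have $s^h=w(g_1^h,\dots,g_k^h)\in S$. Hence the $G$-conjugacy class of each $s\in S$ is contained in $S$ and so has at most $m$ elements, which means $[G:C_G(s)]\leq m$. Since $w(G)=\langle S\rangle$ and $|S|\leq m$, the centralizer $D=\bigcap_{s\in S}C_G(s)=C_G(w(G))$ satisfies $[G:D]\leq m^m$ by the elementary Poincaré bound $[G:\bigcap_i H_i]\leq\prod_i[G:H_i]$ for the index of an intersection of finitely many subgroups.

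The key observation is then that $D\cap w(G)$ centralizes every generator of $w(G)$, and therefore equals $Z(w(G))$; consequently $[w(G):Z(w(G))]=[w(G):D\cap w(G)]\leq[G:D]\leq m^m$ is $m$-bounded. Thus $w(G)$ is a group whose center has $m$-bounded index.

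Finally I would invoke the quantitative form of Schur's theorem: if $[H:Z(H)]=n$ is finite, then $H'$ is finite of order bounded by a function of $n$ alone (Wiegold's explicit estimate would do, but any bound depending only on $n$ suffices here). Applying this to $H=w(G)$ with $n\leq m^m$ yields that $|w(G)'|$ is $m$-bounded, which is exactly the claim.

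Since this argument is short and uses only standard facts, I do not expect a genuine obstacle. The one point requiring care is the passage from \emph{``each generator has a small conjugacy class''} to a bound on $|w(G)'|$: one must route through the center via Schur's theorem rather than attempt to apply B.\ H.\ Neumann's theorem on BFC-groups directly to $w(G)$, because a priori we only control the conjugacy classes of the generators $s\in S$, not of an arbitrary element of $w(G)$.
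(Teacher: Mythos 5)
Your proof is correct, and it is essentially the argument behind the cited result: the paper itself gives no proof of this lemma (it is imported verbatim from \cite[Lemma 4]{DeMoShEng1}), and the standard proof there runs exactly along your lines --- conjugation-invariance of the set $S$ of $w$-values gives $[G:C_G(s)]\leq m$ for each $s\in S$, hence $C_G(w(G))=\bigcap_{s\in S}C_G(s)$ has $m$-bounded index, hence $[w(G):Z(w(G))]$ is $m$-bounded, and the quantitative (Wiegold) form of Schur's theorem bounds $|w(G)'|$. The only cosmetic difference is your Poincar\'e bound $m^m$ in place of the bound $m!$ one gets from the permutation action of $G$ on $S$; both are $m$-bounded, so nothing changes, and your closing caveat --- that one must route through the center rather than treat $w(G)$ as a BFC-group --- is exactly the right point of care.
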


\begin{lemma}[\cite{DeKlSh20} Lemma 2.2]\label{lem: finite conjugacy class}
    Let $G$ be a profinite group and $g\in G$ be an element whose conjugacy class $g^G$ contains less than $2^{\aleph_0}$ elements. Then $g^G$ is finite.
\end{lemma}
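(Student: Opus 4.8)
The plan is to translate the statement into a purely topological fact about the conjugacy class, viewed as a space, and then exploit its homogeneity. First I would observe that $g^G$ is a \emph{closed} subset of $G$: it is the image of the compact space $G$ under the continuous map $x\mapsto x^{-1}gx$, hence compact, and therefore closed in the Hausdorff group $G$. Consequently $g^G$, with the subspace topology, is itself a profinite space (compact, Hausdorff, totally disconnected), so its clopen subsets form a basis and separate points. The key extra feature is \emph{homogeneity}: for any two elements $g^x,g^y\in g^G$, conjugation by $z=x^{-1}y$ is a self-homeomorphism of $G$ that carries $g^G$ onto itself and sends $g^x$ to $g^{xz}=g^y$. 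Thus the group of self-homeomorphisms of $g^G$ acts transitively, and the plan is to show that an infinite homogeneous profinite space has cardinality at least $2^{\aleph_0}$; the contrapositive then yields the lemma.

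Homogeneity forces a clean dichotomy according to whether $g^G$ has an isolated point. If some point of $g^G$ is isolated, then by transitivity every point is isolated, so $g^G$ is discrete; being also compact, it is then finite, and we are done. It therefore remains to treat the case where $g^G$ has \emph{no} isolated points, and the goal becomes to prove $|g^G|\geq 2^{\aleph_0}$, contradicting the hypothesis. Homogeneity is genuinely needed at this point: an infinite profinite space can be countable (for instance a convergent sequence together with its limit), so without transitivity the desired conclusion would simply be false, and this is the conceptual crux of the argument.

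The remaining work is the standard Cantor-scheme construction, which I expect to be the only real obstacle. Assuming $g^G$ has no isolated points, I would first note that every nonempty open subset of $g^G$ is infinite, since a finite nonempty open set in a $T_1$ space contains an isolated point. Using that clopen sets form a basis and separate points, I would then recursively construct nonempty clopen sets $U_s$ indexed by finite binary strings $s$ so that $U_{s0}$ and $U_{s1}$ are disjoint nonempty clopen subsets of $U_s$; this is possible precisely because each $U_s$ is infinite, hence contains two distinct points that a clopen set separates. For each $\alpha\in\{0,1\}^{\N}$ the nested intersection $\bigcap_n U_{\alpha|n}$ is nonempty by compactness, and distinct $\alpha$ yield disjoint intersections; choosing a point from each produces an injection of $\{0,1\}^{\N}$ into $g^G$, whence $|g^G|\geq 2^{\aleph_0}$. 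Combined with the dichotomy above, this shows that a conjugacy class of cardinality less than $2^{\aleph_0}$ must be finite.
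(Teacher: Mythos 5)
Your argument is correct, and it takes a genuinely different route from the one the paper relies on. The paper in fact gives no proof of this lemma; it is imported verbatim from \cite{DeKlSh20}, where the argument is group-theoretic rather than topological: the map $x \mapsto g^x$ induces a continuous bijection from the coset space $G/C_G(g)$ onto $g^G$, so $|g^G| = [G : C_G(g)]$, and one then invokes the standard fact that a closed subgroup of a profinite group is either open or has index at least $2^{\aleph_0}$; since $C_G(g)$ is closed and the class is assumed small, $C_G(g)$ must be open and $g^G$ is finite. Your proof replaces this with a direct analysis of $g^G$ as a compact, zero-dimensional, homogeneous space, and all steps check out: closedness of $g^G$ as a continuous image of the compact group, the transitive action by conjugation, the dichotomy (an isolated point forces discreteness, hence finiteness by compactness), and the Cantor scheme in the perfect case, where you correctly note that nonempty open sets are infinite in a $T_1$ space without isolated points and that clopen sets separate points. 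The two arguments are at bottom the same phenomenon: your isolated-point dichotomy is the openness dichotomy for $C_G(g)$ in disguise (the trivial coset is isolated in $G/C_G(g)$ precisely when $C_G(g)$ is open), and your binary-tree construction is essentially the usual proof of the index fact. What your version buys is self-containedness and a clear identification of where homogeneity is indispensable, as you rightly stress with the convergent-sequence example; what the centralizer route buys is brevity given the standard fact, plus the formulation actually used later in the proof of Theorem \ref{main2}, namely that $C_G(g)$ is \emph{open} when the class is small --- though your version delivers this too, since finiteness of $g^G$ makes $C_G(g)$ a closed subgroup of finite index, hence open.
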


The following lemma is straightforward.
\begin{lemma}\label{lem: in metabelian groups} Let $v(x,y)$ be as in the introduction.
In every group where $v$ is a law, and in particular in metabelian groups, the values of the word $w_o$ coincide exactly with the values of the commutator word $[x,y]$.
\end{lemma}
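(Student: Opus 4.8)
The plan is to show that under the hypothesis $v(x,y)=1$ the word $w_o$ simply collapses to a power of the commutator $[x,y]$, and then to identify that power as the first power by a short count of the signs $\varepsilon_i$. The metabelian case is then an immediate special instance, since $v$ turns out to be a law there.

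First I would dispose of the ``in particular'' clause by explaining why $v$ is a law in metabelian groups. Writing $G$ for the group and $G'$ for its derived subgroup, both $[x^d,y^d]^d$ and $[y^d,x^{-d}]^d$ lie in $G'$, being powers of commutators. Hence $v(x,y)=[[x^d,y^d]^d,[y^d,x^{-d}]^d]$ is a commutator of two elements of $G'$; when $G$ is metabelian, $G'$ is abelian, so this commutator is trivial. Thus $v(x,y)=1$ for all $x,y$, i.e.\ $v$ is a law.

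Next, working in an arbitrary group in which $v$ is a law, I would substitute $v(x,y)=1$ into the defining expression for $w_o$. Every factor $v(x,y)^{n+j}$ becomes the identity, and what survives is the product of the remaining commutator factors. Setting $\varepsilon_0=1$ to account for the leading factor $[x,y]$, and using that powers of a single element commute, one collects the exponents into a single power:
$$w_o(x,y)=[x,y]^{\varepsilon_0}[x,y]^{\varepsilon_1}\cdots[x,y]^{\varepsilon_{h-1}}=[x,y]^{S},\qquad S:=\sum_{i=0}^{h-1}\varepsilon_i.$$

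The only point requiring genuine care is the evaluation of $S$, and this is where I would guard against an off-by-one error rather than against any real difficulty. The prescribed sign pattern assigns exactly five values $+1$ and five values $-1$ to each block of ten consecutive indices $\{10k+1,\dots,10k+10\}$, so each complete block contributes $0$ to the sum. Since $h\equiv 1\pmod{10}$, the indices $1,\dots,h-1$ decompose into exactly $(h-1)/10$ such complete blocks, whence $\sum_{i=1}^{h-1}\varepsilon_i=0$ and therefore $S=\varepsilon_0=1$. Consequently $w_o(x,y)=[x,y]$ for all $x,y$, and in particular the set of $w_o$-values coincides with the set of $[x,y]$-values, as claimed. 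I do not anticipate a substantive obstacle: the entire content is the bookkeeping confirming that the leftover commutator exponents telescope to $1$.
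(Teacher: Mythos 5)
Your proposal is correct and is essentially the paper's own (implicit) argument: the paper simply declares this lemma ``straightforward'' and omits the proof, and your filling-in---$v$ is a law in metabelian groups because both entries of the outer commutator lie in $G'$, substituting $v\equiv 1$ collapses $w_o$ to $[x,y]^S$, and the five-plus/five-minus sign pattern in each block of ten indices together with $h\equiv 1\pmod{10}$ gives $S=1$---is exactly the bookkeeping being left to the reader. Your count is accurate, so $w_o(x,y)=[x,y]$ identically in any such group, which is even stronger than the stated coincidence of value sets.
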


Using the above lemma and the fact that every finite non-abelian group contains a non-abelian metabelian subgroup, the following result is immediate.

\begin{theorem}[Lemma 29.1 of \cite{Ols91B}]
    Every finite group where $w_o$ is a law is abelian.
\end{theorem}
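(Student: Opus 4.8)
The plan is to argue by contradiction, reducing everything to the already-established behaviour of $w_o$ on metabelian groups. Suppose, for contradiction, that $G$ is a finite non-abelian group in which $w_o$ is a law. Since being a law is inherited by subgroups, $w_o$ remains a law in every subgroup of $G$, so the strategy is to locate inside $G$ a subgroup on which Lemma~\ref{lem: in metabelian groups} can be applied to force abelianness, thereby contradicting the choice of that subgroup.

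First I would invoke the structural fact that a finite non-abelian group always contains a non-abelian metabelian subgroup. Concretely, pass to a minimal non-abelian subgroup $H \le G$, that is, a non-abelian subgroup all of whose proper subgroups are abelian. Such Miller--Moreno groups are well understood: either $H$ is a $p$-group whose derived subgroup is cyclic of order $p$ and central, or $H$ is a semidirect product of an elementary abelian normal subgroup by a cyclic group of prime-power order acting irreducibly. In both cases $H'$ is abelian, so $H$ is metabelian, and this supplies a non-abelian metabelian subgroup $H \le G$.

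Next, since $w_o$ is a law in $G$ it is a law in $H$, so every value $w_o(a,b)$ with $a,b \in H$ equals $1$. Because $H$ is metabelian, Lemma~\ref{lem: in metabelian groups} tells us that on $H$ the set of $w_o$-values coincides with the set of $[x,y]$-values. Hence every commutator $[a,b]$ with $a,b \in H$ is trivial, i.e.\ $H$ is abelian. This contradicts the non-abelianness of $H$, and therefore no finite non-abelian $G$ can satisfy the law $w_o$; equivalently, every finite group in which $w_o$ is a law is abelian.

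The routine verifications (laws descend to subgroups, and the exact matching of value sets) are immediate, so the only genuinely substantive input is the group-theoretic fact that every finite non-abelian group harbours a non-abelian metabelian subgroup. I expect this to be the one point worth pinning down, although it is entirely classical and rests on the classification of minimal non-abelian finite groups rather than on anything particular to $w_o$. Once it is in hand, the conclusion follows directly from Lemma~\ref{lem: in metabelian groups}, which is why the statement is flagged as immediate.
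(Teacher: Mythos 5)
Your proof is correct and follows exactly the paper's route: the paper derives this theorem ``immediately'' from Lemma~\ref{lem: in metabelian groups} together with the fact that every finite non-abelian group contains a non-abelian metabelian subgroup, which is precisely your argument (you merely make explicit, via Miller--Moreno groups, why that classical fact holds). Nothing is missing.
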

 Olshanskii proved that the variety of groups satisfying the law $w_o\equiv1$ contains non-abelian infinite groups. Moreover, he established the following theorem.

\begin{theorem}[Theorem 39.7 of \cite{Ols91B}] \label{th: olshanskii abelian}
    There is a group $G$ where $w_o$ takes a single nontrivial value, but $w_o(G)$ is infinite.
\end{theorem}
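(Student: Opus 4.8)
The plan is to realize the desired group as a central extension of an infinite group lying in the variety defined by $w_o\equiv 1$. Suppose we can build a short exact sequence $1\to Z\to G\to H\to 1$ in which $Z=\langle z\rangle$ is central and infinite cyclic and $H$ is an infinite group where $w_o$ is a law. Since $Z$ is central and $w_o$ vanishes in $H$, every value $w_o(a,b)$ with $a,b\in G$ lies in $Z$; moreover, because commutators and hence $v(x,y)$ are insensitive to central factors, the map $(a,b)\mapsto w_o(a,b)$ descends to a well-defined function $\phi\colon H\times H\to Z$. Thus $w_o(G)\le Z$, and the whole theorem reduces to engineering $G$ so that the image of $\phi$ is exactly $\{1,z\}$ while $z$ has infinite order: in that case $w_o$ takes the single nontrivial value $z$ and $w_o(G)=\langle z\rangle$ is infinite.

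The infinite group $H$ in the variety is available from the fact recorded above that the variety of $w_o\equiv 1$ contains infinite non-abelian groups. The genuinely delicate point is to construct the central extension so that $\phi$ takes only one nontrivial value. This cannot be done for an arbitrary word: a multilinear commutator, say, would induce a bilinear map whose image over an infinite group cannot be a single nonzero element. It is precisely here that the intricate shape of $w_o$ is used --- the period-$10$ sign pattern of the exponents $\varepsilon_i$ with $h>50000$, together with the high powers built into $v(x,y)=[[x^d,y^d]^d,[y^d,x^{-d}]^d]$ and the calibrated parameters $n$ and $d$ --- so that the reduced words obtained by substituting group elements into $w_o$ behave like long periodic words satisfying a small-cancellation condition relative to one another.

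With this combinatorial input, I would build $G$ by Olshanskii's geometric method as the limit of a graded sequence of presentations: start from a presentation of $H$, adjoin a central letter $z$, and successively impose the relations identifying the nontrivial values of $w_o$ with $z$, adding relators grade by grade. The argument then runs through van Kampen diagrams over this graded presentation, checking that at each stage the newly introduced relators overlap the old ones only in pieces that are short compared with the relators, so that the small-cancellation hypotheses are inherited at every grade and the construction stabilizes to a group with $w_o(G)\le Z$ and $\phi$ single-valued off its kernel.

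The main obstacle --- and the real content of the proof --- is to show that $z$ survives, that is, that $z$ has infinite order in $G$ rather than being collapsed by the accumulated identifications $w_o(a,b)=z$. Geometrically this is the assertion that no reduced diagram over the graded presentation can force a nontrivial relation among the powers of $z$, and it is exactly what the small-cancellation estimates, tailored to the specific combinatorics of $w_o$, are designed to guarantee. Once the infiniteness of $\langle z\rangle$ is secured, the conclusion follows: $w_o$ assumes a single nontrivial value while its verbal subgroup $w_o(G)$ is infinite.
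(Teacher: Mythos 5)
The paper itself offers no proof of this statement: it is quoted directly as Theorem~39.7 of Olshanskii's book \cite{Ols91B}, whose proof rests on the machinery of graded presentations and van Kampen diagrams developed over many chapters there. Your outline does capture the shape of that construction --- a central extension $1\to\langle z\rangle\to G\to H\to 1$ in which every $w_o$-value lies in the centre, all nontrivial values are identified with the single element $z$, and small-cancellation estimates guarantee that $z$ has infinite order --- and your preliminary reductions (values lie in $Z$, the induced map $\phi$ is well defined, $w_o(G)\le Z$) are correct. So the strategy points in the right direction.

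Nevertheless, as a proof the proposal has two genuine gaps. The first is structural: you propose to ``start from a presentation of $H$, adjoin a central letter $z$, and successively impose the relations identifying the nontrivial values of $w_o$ with $z$,'' where $H$ is fixed in advance as an infinite group in the variety $w_o\equiv 1$. As stated this collapses: since $w_o$ is a law in $H$, the relations of $H$ already imply $w_o(a,b)=1$ for all $a,b$, so in the group $H\times\langle z\rangle$ there are no nontrivial $w_o$-values at all, and imposing any relation of the form $w_o(a,b)=z$ forces $z=1$. In Olshanskii's construction the quantifiers go the other way: one starts from a free group with a central letter $z$ adjoined, imposes relators $w_o(u,v)z^{-1}$ grade by grade, and only afterwards concludes that $G/\langle z\rangle$ is an infinite group lying in the variety --- $H$ is an output of the construction, not an input. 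The second gap is that the entire mathematical content is deferred: the verification that the relators built from $w_o$ (for the calibrated parameters $n$, $d$ and the sign pattern of the $\varepsilon_i$) satisfy the graded small-cancellation conditions, the mutual consistency of the identifications $w_o(u,v)=z$, and the resulting proof that no reduced diagram yields a relation $z^k=1$, are all named as goals but never carried out. For this theorem those estimates \emph{are} the proof; listing the obligations and the machinery that should discharge them is a road map, not an argument.
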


Thus, it follows that the word $w_o$ is not concise.
\section{Main results}

A group-word $w$ is called {\it boundedly} concise in the class of groups $\mathcal C$ if the verbal subgroup $w(G)$ is finite of $(m,w)$-bounded order whenever $w$ takes at most $m$ values in a group $G\in\mathcal C$. It is known, due to Fern\'andez-Alcober and Morigi \cite{FAMo10}, that every word that is concise in the class of all groups is in fact boundedly concise. On the other hand, it is an open problem whether the same phenomenon holds for words that are concise in the class of residually finite groups.
We can now prove the following theorem.
\begin{theorem}\label{main1}
    The word $w_o$ is boundedly concise in residually finite groups.
\end{theorem}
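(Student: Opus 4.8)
The plan is to reduce to finite groups and then exploit Olshanskii's theorem that finite groups satisfying the law $w_o\equiv 1$ are abelian. First I would record two elementary facts valid in every group: since $w_o$ is a product of commutators (each factor $[x,y]^{\pm 1}$ and each $v$ lying in $\gamma_2$), one has $w_o(G)\le G'$; and since $w_o(G)$ is the verbal subgroup, $w_o$ is a law in $G/w_o(G)$. For the reduction, note that if $G$ is residually finite and $w_o$ takes at most $m$ values, then every finite quotient $\bar G$ of $G$ also has at most $m$ values of $w_o$; so if I can bound $|w_o(\bar G)|$ by a function of $m$ uniformly over all finite quotients, then, because $w_o(G)$ is residually finite and all of its relevant finite quotients have order at most that bound, $w_o(G)$ itself is finite of $m$-bounded order. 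Hence it suffices to prove the statement for finite groups.

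Now let $G$ be finite with at most $m$ values of $w_o$. The key step is the identity $w_o(G)=G'$. Indeed $w_o$ is a law in the finite group $G/w_o(G)$, so by the theorem that every finite group satisfying $w_o\equiv 1$ is abelian (Lemma 29.1 of \cite{Ols91B}) the quotient $G/w_o(G)$ is abelian; hence $G'\le w_o(G)$, and combined with $w_o(G)\le G'$ this gives $w_o(G)=G'$. This identification is what converts the problem into a commutator problem, and I expect it to be the main obstacle to locate correctly: everything hinges on using the finiteness of the quotient to invoke Olshanskii's theorem, which is precisely where residual finiteness enters.

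With $w_o(G)=G'$ in hand I would conclude as follows. By Lemma \ref{fromm} the order of $w_o(G)'$ is $m$-bounded, and since $w_o(G)'=G''$ this bounds $|G''|$. Passing to the metabelian quotient $G/G''$, Lemma \ref{lem: in metabelian groups} shows that there the values of $w_o$ coincide with the values of $[x,y]$, so $G/G''$ has at most $m$ commutator values; by the classical bounded conciseness of the commutator word, $|G'/G''|$ is $m$-bounded. Therefore $|w_o(G)|=|G'|=|G'/G''|\,|G''|$ is $m$-bounded, which settles the finite case and, via the reduction above, the theorem.
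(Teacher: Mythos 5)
Your proposal is correct and follows essentially the same route as the paper: reduce to finite quotients via residual finiteness, use Lemma \ref{fromm} to bound $|w_o(G)'|$, invoke Olshanskii's theorem that finite groups satisfying $w_o\equiv 1$ are abelian, and finish with Lemma \ref{lem: in metabelian groups} together with bounded conciseness of the commutator word. The only cosmetic difference is that you make the identity $w_o(G)=G'$ explicit and work with $G/G''$, whereas the paper works directly with the quotient $Q/w_o(Q)'$ of each finite image $Q$; these amount to the same computation.
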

\begin{proof} Let $m$ be a positive integer and 
     $G$ a residually finite group in which $w_o$ takes $m$ values. In view of Lemma \ref{fromm} there is a number $f_1$ depending only on $m$ such that $|w_o(G)'|\leq f_1$.
    If $Q$ is a finite homomorphic image of $G$, observe that the quotient $Q/w_o(Q)$ is abelian by Theorem \ref{th: olshanskii abelian}. Hence $Q/w_o(Q)'$ is metabelian. Lemma \ref{lem: in metabelian groups} now implies that  $Q/w_o(Q)'$ has at most $m$ commutators. Note that the commutator word is boundedly concise (see for example \cite{SegSh99} for an explicit bound), so $|w_o(Q/w_o(Q'))|\leq f_2$, where $f_2$ is a number depending only on $m$. Hence $|w_o(Q)|\leq f_1f_2$. This holds for every finite  homomorphic image $Q$ of $G$ so we deduce that $|w_o(G)|\leq f_1f_2$.
\end{proof}
We will now prove that the word $w_o$ is strongly concise in profinite groups.

Start with the following lemma.

\begin{lemma}\label{simple}
 Let $G$ be a profinite group topologically isomorphic to a Cartesian product of finite simple groups.
If the word $w_o$ takes less than $2^{\aleph_0}$ values in $G$, then $G$ is finite.
\end{lemma}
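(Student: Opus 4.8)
The plan is to reduce the assertion to a cardinality count, exploiting that $w_o$-values in a Cartesian product are computed coordinatewise, together with the theorem (Lemma 29.1 of \cite{Ols91B}) that every finite group in which $w_o$ is a law is abelian. Write $G=\prod_{i\in I}S_i$, where each $S_i$ is a finite simple group; the factors that drive the argument are the non-abelian ones, since on a cyclic (hence abelian) simple factor the word $w_o$ is identically trivial. Because $w_o$ is a group-word, for $g=(g_i)_i$ and $h=(h_i)_i$ we have $w_o(g,h)=(w_o(g_i,h_i))_i$, and as $(g,h)$ ranges over $G\times G$ the coordinate pairs $(g_i,h_i)$ range independently over $S_i\times S_i$. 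Hence the set of $w_o$-values of $G$ is exactly the Cartesian product $\prod_{i\in I}W_i$, where $W_i$ denotes the set of $w_o$-values in $S_i$, and its cardinality equals $\prod_{i\in I}|W_i|$.

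The first step is to bound $|W_i|$ from below on each non-abelian factor. Since such an $S_i$ is a non-abelian finite group, the cited theorem shows that $w_o$ is not a law in $S_i$, so there exist $a_i,b_i\in S_i$ with $w_o(a_i,b_i)\neq 1$. On the other hand $w_o(a_i,1)=1$, because every commutator occurring in $w_o$ vanishes as soon as one of its arguments is trivial. Thus both $1$ and $w_o(a_i,b_i)$ lie in $W_i$, and $|W_i|\geq 2$.

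The second step is the counting argument. Suppose, for contradiction, that infinitely many factors $S_i$ are non-abelian, and let $K\subseteq I$ be the corresponding index set. Fix $g=(g_i)_i$ with $g_i=a_i$ for $i\in K$ (and $g_i=1$ otherwise), and for each subset $T\subseteq K$ define $h^{(T)}$ by $h^{(T)}_i=b_i$ for $i\in T$ and $h^{(T)}_i=1$ for $i\notin T$. Then the $i$-th coordinate of $w_o(g,h^{(T)})$ equals $w_o(a_i,b_i)\neq 1$ when $i\in T$ and equals $1$ when $i\notin T$, so the map $T\mapsto w_o(g,h^{(T)})$ is injective. This yields at least $2^{|K|}\geq 2^{\aleph_0}$ distinct $w_o$-values, contradicting the hypothesis. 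Therefore only finitely many $S_i$ are non-abelian; as the simple factors are non-abelian and each is finite, $I$ is finite and so is $G$.

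I expect no serious obstacle, only one point of genuine substance: the conclusion rests entirely on non-abelianness. Abelian simple factors are cyclic of prime order, on which $w_o$ vanishes, so they are invisible to the value count; the counting therefore controls precisely the non-abelian factors, and it is there that finiteness is forced. The remaining ingredients are routine bookkeeping — values are computed coordinatewise, independent coordinates realize the full product $\prod_i W_i$, and infinitely many binary choices already produce continuum-many values. The one nontrivial input, that each non-abelian factor carries a nontrivial $w_o$-value, is supplied directly by Lemma 29.1 of \cite{Ols91B}; note that simplicity itself is not actually needed, only that the relevant factors are non-abelian and finite.
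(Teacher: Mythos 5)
Your proof is correct and is essentially the paper's own argument: choose a nontrivial $w_o$-value in each non-abelian factor (the paper obtains this from the fact that every finite non-abelian group contains a non-abelian metabelian subgroup, which is the content of the cited Lemma 29.1 of \cite{Ols91B}) and then let arbitrary subsets of an infinite index set produce $2^{\aleph_0}$ distinct $w_o$-values coordinatewise, exactly as in your map $T\mapsto w_o(g,h^{(T)})$. Your caveat about abelian factors is a genuine point that the paper glosses over: its claim that $w_o(S_i)$ is nontrivial for every $i$ tacitly assumes each $S_i$ is non-abelian (which is how the lemma is applied in Theorem \ref{main2}, where Wilson's series yields products of \emph{non-abelian} finite simple groups), and indeed the lemma as literally stated would fail for an infinite Cartesian product of cyclic groups of prime order, where $w_o$ takes only the value $1$.
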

\begin{proof} Write $G=\prod_{i\in I} S_i$, where the factors $S_i$ are finite simple groups. Since every finite non-abelian group contains a non-abelian metabelian group, it follows that $w(S_i)$ is nontrivial for any $i\in I$. We need to show that the index set $I$ is finite. Assume by contradiction that $I$ is infinite.  Choose a nontrivial $w_o$-value $c_i\in S_i$ for each $i\in I$. Observe that for each subset $J\subseteq I$ the product $c_J=\prod_{i\in J} c_i$ is a $w_o$-value. If $J_1\neq J_2$, then $c_{J_1}\neq c_{J_2}$ and therefore $G$ contains at least $2^{\aleph_0}$ distinct $w_o$-values, a contradiction. \end{proof}

\begin{lemma}\label{prosoluble}
 Let $G$ be a prosoluble group. If the word $w_o$ takes less than $2^{\aleph_0}$ values in $G$, then the commutator subgroup $G'$ is finite.
\end{lemma}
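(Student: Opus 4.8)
The plan is to identify $w_o(G)$ with the derived subgroup $G'$ and then to control $G'$ in two stages: first the metabelian quotient $G/G''$, via the strong conciseness of the commutator word, and afterwards the remaining layer $G''$, via a finiteness-of-conjugacy-classes argument.

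First I would record that for a \emph{profinite} group $G$ one has $w_o(G)=G'$. Indeed, every $w_o$-value is a product of commutators, so $w_o(G)\le G'$; conversely the profinite quotient $G/w_o(G)$ satisfies $w_o\equiv 1$, hence each of its finite quotients is abelian by Lemma 29.1 of \cite{Ols91B}, so $G/w_o(G)$ is abelian and $G'\le w_o(G)$. Thus the task reduces to proving that $G'$ is finite. Next I would pass to the metabelian quotient $\bar G=G/G''$, which is again profinite. By Lemma \ref{lem: in metabelian groups} the $w_o$-values and the commutator values coincide in $\bar G$, and since the $w_o$-values of $\bar G$ are images of those of $G$, the commutator word takes fewer than $2^{\aleph_0}$ values in $\bar G$. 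Invoking the strong conciseness of the commutator word in profinite groups \cite{DeKlSh20}, I obtain that $\bar G'=G'/G''$ is finite.

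It remains to prove that $G''$ is finite, which is the heart of the matter. Since conjugates of $w_o$-values are again $w_o$-values, each $w_o$-value lies in a conjugacy class of fewer than $2^{\aleph_0}$ elements, hence in a finite class by Lemma \ref{lem: finite conjugacy class}. I would aim to deduce from this that $G'=w_o(G)$ is a profinite FC-group, and in fact a BFC-group (conjugacy classes of uniformly bounded size); B.~H.~Neumann's theorem then yields that $(G')'=G''$ is finite, and together with the finiteness of $G'/G''$ this gives $|G'|<\infty$.

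The main obstacle is precisely this last stage. Two points require care: first, the $w_o$-values are only the topological generators of $G'$, whereas the FC-centre of a profinite group need not be closed, so one must argue that the whole of the closed subgroup $w_o(G)$ consists of FC-elements; second, one needs the conjugacy classes to be \emph{boundedly} finite rather than merely finite. I expect both to be handled by a Baire-category argument applied to the compact set of $w_o$-values, covered by the closed sets $\{g\in G:\lvert g^G\rvert\le n\}$, followed by a compactness reduction to a global bound. Prosolubility should, if needed, supply an alternative finish through an induction along the derived series that the bound so obtained forces to terminate.
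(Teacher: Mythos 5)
Your first two steps are correct and coincide with the paper's opening move: passing to $G/G''$, using Lemma \ref{lem: in metabelian groups} and the strong conciseness of the commutator word \cite{DeKlSh20} to get that $G'/G''$ is finite (the preliminary identification $w_o(G)=G'$ is also correct, though not strictly needed). The gap is in the third stage, which you yourself call the heart of the matter, and the Baire-category strategy you propose there cannot be completed. Applying Baire's theorem to the compact set $X$ of $w_o$-values covered by the closed sets $S_n=\{g\in G: \lvert g^G\rvert\le n\}$ yields only that some $S_N$ contains a nonempty \emph{relatively open} piece of $X$; since the $S_n$ are closed rather than open, compactness of $X$ extracts no finite subcover and hence no global bound. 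More decisively, the statement you are aiming at is false at this level of generality: let $G=\prod_n H_n$ be a Cartesian product of finite nonabelian groups with unbounded conjugacy class sizes, and let $X$ be the set of elements having at most one nontrivial coordinate. Then $X$ is compact, conjugation-invariant, consists entirely of elements with finite conjugacy classes, and topologically generates $G$; yet $G$ contains elements with infinite classes and $G'=\prod_n H_n'$ is infinite. So no argument using only ``$w_o(G)$ is topologically generated by a compact set of FC-elements'' can show that $w_o(G)$ is FC, let alone BFC; genuinely group-theoretic input is required, and your closing remark that prosolubility ``should, if needed'' finish things by an induction along the derived series is too vague to fill the hole.

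The missing idea — and the place where the paper uses prosolubility essentially — is to convert \emph{topological} generation of $G'$ into \emph{abstract} generation by finitely many $w_o$-values. Since $G'/G''$ is finite, there is a finite set $T$ of $w_o$-values with $G'=\overline{\langle T\rangle}\,G''$; by Lemma \ref{lem: finite conjugacy class} each $t\in T$ has finitely many conjugates, so after enlarging $T$ by these conjugates one may assume $\overline{\langle T\rangle}$ is normal in $G$. The quotient $\bar G=G/\overline{\langle T\rangle}$ then satisfies $\bar G'=\bar G''$, so $\bar G'$ is a topologically perfect prosoluble group, hence trivial; thus $\bar G$ is abelian and $G'=\overline{\langle T\rangle}$. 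Now $C_G(T)$ is open (finitely many elements, each with open centralizer), and since centralizers are closed and the abstract subgroup $\langle T\rangle$ is dense in $G'$, $C_G(T)$ centralizes all of $G'$. Hence $Z(G')$ has finite index in $G'$, and Schur's theorem (no Neumann BFC theorem needed) gives that $G''=(G')'$ is finite. One then reruns your second step on the metabelian group $G/G''$ to conclude that $G'$ itself is finite.
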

\begin{proof}  
 By Lemma \ref{lem: in metabelian groups}, $w_o(G/G'')=G'/G''$. Moreover, as the commutator word is strongly concise in profinite groups \cite{DeKlSh20}, $G'/G''$ is finite. Therefore there exists a finite set $T$ of $w_o$-values such that $G'=\langle T \rangle G''$. Note that by Lemma \ref{lem: finite conjugacy class} each element of $T$ has finitely many conjugates in $G$. So we can choose $T$ in such a way that the subgroup $\langle T\rangle$ is normal in $G$. Set $\bar G=G/\langle T\rangle$. Observe that $\bar G$ is a prosoluble group with the property that $\bar G'=\bar G''$. It follows that $\bar G$ is abelian and so $G'=\langle T\rangle$.
 
Now by Lemma \ref{lem: finite conjugacy class}, for each $t\in T$ we have that $[\langle T \rangle :C_{G}(t)]\leq \infty$, whence $[\langle T \rangle:Z(\langle T \rangle)]\leq \infty$. By Schur's Theorem, the commutator subgroup $\langle T \rangle'$ is finite. This implies that $G$ is finite-by-metabelian. Factoring out $G''$ we can assume that $G$ is metabelian and apply again Lemma \ref{lem: in metabelian groups}. Since the commutator word is strongly concise, we conclude that $G'$ is finite, as required.
\end{proof}

It is well known that if $K$ is a finite group, then there exists a series
$$
1=K_0\leq K_1\leq \cdots \leq K_{2h+1}=K
$$
of normal subgroups of $K$ such that $K_{i+1}/K_{i}$ is soluble (possibly trivial) if $i$ odd and a direct product of non-abelian simple groups if $i$ is even. The number of insoluble factors in this series is called the insoluble length $\lambda(K)$ of $K$.
Theorem 1.4 of \cite{KhSh15} implies that if the Sylow 2-subgroup of $K$ is soluble with derived length $l$, then $\lambda(G)$ is bounded in terms of $l$ only. We are now ready to complete the proof that the word $w_o$ is strongly concise in profinite groups.

\begin{theorem}\label{main2}
 Let $G$ be a profinite group in which the word $w_o$ takes less than $2^{\aleph_0}$ values. Then the verbal subgroup $w_o(G)$ is finite.
\end{theorem}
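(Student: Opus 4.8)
The plan is to glue together the two special cases already established---Lemma \ref{prosoluble} for prosoluble groups and Lemma \ref{simple} for Cartesian products of finite simple groups---by first bounding the insoluble length of $G$ and then inducting on it. Two observations will be used repeatedly. First, since the set of $w_o$-values is closed under conjugation and has fewer than $2^{\aleph_0}$ elements, the conjugacy class of any $w_o$-value is contained in this set, hence is finite by Lemma \ref{lem: finite conjugacy class}. Second, every $w_o$-value is a product of commutators, so $w_o(G)\leq G'$; in particular, when $G$ is prosoluble Lemma \ref{prosoluble} gives that $w_o(G)\leq G'$ is finite, which will serve as the base of the induction.

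The first main step is to show that $G$ has finite insoluble length. Let $P$ be a Sylow pro-$2$ subgroup of $G$. Then $P$ is pro-$2$, hence prosoluble, and $w_o$ takes fewer than $2^{\aleph_0}$ values on $P$, so Lemma \ref{prosoluble} yields that $P'$ is finite. Consequently $P$ has finite derived length, say $l$, and the Sylow $2$-subgroup of every finite continuous quotient $G/N$, being an image of $P$, has derived length at most $l$. By Theorem 1.4 of \cite{KhSh15} the insoluble length $\lambda(G/N)$ is then bounded in terms of $l$ alone, uniformly in $N$. Taking the supremum over finite quotients, $G$ has finite insoluble length, and hence admits a finite series of closed normal subgroups whose successive factors are alternately prosoluble and Cartesian products of non-abelian finite simple groups.

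The second step is an induction on $\lambda(G)$. If $\lambda(G)=0$ then $G$ is prosoluble and we are done. If $\lambda(G)\geq 1$, I would pass to $\bar G=G/R$, where $R$ is the prosoluble radical; then $\bar G$ has trivial prosoluble radical, so its socle $S=\mathrm{Soc}(\bar G)$ is a Cartesian product of non-abelian finite simple groups, which Lemma \ref{simple} forces to be finite. Since $\bar G/S$ has insoluble length $\lambda(G)-1$, the inductive hypothesis gives that $w_o(\bar G/S)$ is finite. The routine part of the gluing is that passing to a quotient carries $w_o(G)$ onto the verbal subgroup of the quotient, and that an extension of a finite group by a finite group is finite; this lets one descend through the \emph{finite} semisimple factor $S$ and conclude that $w_o(\bar G)=w_o(G)R/R$ is finite.

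The hard part will be the descent through the \emph{infinite} prosoluble radical $R$: quotienting by $R$ neither lowers the insoluble length nor allows finiteness to be lifted back, so the two lemmas do not glue directly. To deal with it I would first apply Lemma \ref{prosoluble} to $R$, obtaining that $R'$ is finite, so that modulo $R'$ the radical becomes an abelian normal subgroup $A$ of $G$ on which $G$ acts. The crux is then to show that $w_o(G)\cap A$ is finite. Here the subset-product trick of Lemma \ref{simple} is no longer available, since products of $w_o$-values need not be $w_o$-values; instead one must exploit that every $w_o$-value inside $A$ has a finite conjugacy class (Lemma \ref{lem: finite conjugacy class}), aiming to reduce to the situation where $w_o$ takes only finitely many values and then invoke Lemma \ref{fromm} together with the coincidence of $w_o$ with the commutator word in metabelian quotients (Lemma \ref{lem: in metabelian groups}) and the strong conciseness of the commutator word, exactly as in Lemmas \ref{simple} and \ref{prosoluble}. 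Controlling this intersection, and thereby reconstituting the finiteness of $w_o(G)$ from its image modulo $R$ and its part inside $R$, is the main obstacle and the technical heart of the proof.
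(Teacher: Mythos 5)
Your opening step is the same as the paper's: Lemma \ref{prosoluble} makes the Sylow pro-$2$ subgroup soluble, Theorem 1.4 of \cite{KhSh15} bounds the insoluble length of every finite quotient, and (via Lemma 2 and Lemma 3 of \cite{Wil83}) $G$ acquires a finite normal series whose factors are prosoluble or Cartesian products of non-abelian finite simple groups, the latter being finite by Lemma \ref{simple}. After that you diverge, and the divergence is where the proposal fails: your induction on insoluble length reduces everything to the descent through the prosoluble radical $R$, i.e.\ to showing that $w_o(G)\cap A$ is finite, where $A=R/R'$ is abelian and $w_o(G)A/A$ is already known to be finite --- and you leave this step unproved, calling it ``the main obstacle and the technical heart of the proof.'' That is indeed the heart, and the tools you point to cannot close it as stated: Lemma \ref{fromm} needs \emph{finitely} many $w_o$-values, which nothing in your setup provides (the hypothesis only gives fewer than $2^{\aleph_0}$), and the conjugacy-class information from Lemma \ref{lem: finite conjugacy class} has force only when the relevant subgroups are open, which $A$ has not been shown to be. (Your appeals to the existence of the prosoluble radical and to the structure of $\mathrm{Soc}(\bar G)$ as a closed subgroup also need justification in the profinite category, but that is secondary.)

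What the paper does instead, and what is missing from your proposal, is two ideas. First, no induction is needed: since the non-prosoluble factors of the series are finite, the intersection $C$ of their centralizers is an \emph{open} subgroup of $G$, and it is prosoluble because its intersections with the series produce prosoluble sections; one application of Lemma \ref{prosoluble} to $C$ makes $C'$ finite, and factoring out $C'$ leaves $G$ virtually abelian --- no radical, no socle, no extension problem. Second, the virtually abelian case is settled either by quoting Detomi \cite{Det23} (every word is strongly concise in virtually nilpotent profinite groups) or directly: $w_o(G)$ is generated by an open abelian normal subgroup together with finitely many $w_o$-values; the centralizers of those values are open by Lemma \ref{lem: finite conjugacy class}, so $Z(w_o(G))$ is open in $w_o(G)$, Schur's theorem makes $w_o(G)'$ finite, and after factoring it out $w_o(G)$ is abelian, whence every finite quotient of $G/w_o(G)$ is abelian, $G$ is metabelian, and Lemma \ref{prosoluble} finishes. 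The openness of the abelian subgroup --- obtained from the centralizer construction, not from quotienting by $R$ --- is exactly what your version lacks and what makes the Schur-type argument bite.
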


\begin{proof}

Choose a $2$-Sylow subgroup $P$ of $G$. In view of Lemma \ref{prosoluble} observe that $P$ is soluble, say of derived length $l$.
It follows that if $Q$ is any finite homomorphic image of $G$, the insoluble length $\lambda(Q)$ is bounded in terms of $l$ only. 
Applying now a combination of Lemma 2 and Lemma 3 of \cite{Wil83}, we obtain that the group $G$ has a normal series of finite length
\begin{equation}\label{eq: series p-solvable}
1=G_0\leq G_1\leq \cdots \leq G_{h}=G
\end{equation}
each of whose factors is either prosoluble or a Cartesian product of non-abelian finite simple groups.

Lemma \ref{simple} shows that the non-prosoluble factors of the above series are finite. Let $C$ be the intersection of the centralizers in $G$ of the non-prosoluble factors, that is $$C=\{g\in G;\ [G_{i+1},g]\leq G_i\text{ whenever } G_{i+1}/G_i\text{ is not prosoluble}\}.$$
Since the non-prosoluble factors are finite, it follows that $C$ is open in $G$. Note that all the sections of the series obtained by intersecting $C$ with the series \ref{eq: series p-solvable} are prosoluble. Therefore we conclude that $C$ is prosoluble and hence Lemma \ref{prosoluble} tells us that $C'$ is finite. Passing to the quotient $G/C'$ without loss of generality we can assume that our group $G$ is virtually abelian. Now the result follows from a theorem of Detomi that says that every word is strongly concise in the class of virtually nilpotent profinite groups \cite{Det23}.

Alternatively, we can argue in a more direct way. Since $G$ is virtually abelian, the verbal subgroup $w_o(G)$ is generated by its normal open abelian subgroup $A$ and finitely many $w_o$-values, say $g_1,\dots,g_s$. Here, by Lemma \ref{lem: finite conjugacy class}, the centralizer $C_G(g_i)$ is open for every $i=1,\dots,s$. It follows that $B=A\cap C_G(g_1)\cap\dots\cap C_G(g_s)$ is open in $w_o(G)$. Obviously, $B\leq Z(w_o(G))$ so we conclude that $Z(w_o(G))$ is open in $w_o(G)$. The Schur theorem tells us that the commutator subgroup $w_o(G)'$ is finite so we can pass to the quotient $G/w_o(G)'$ and without loss of generality assume that $w_o(G)$ is abelian. But then $G$ is metabelian and the result is immediate from Lemma \ref{prosoluble}.

\end{proof}

\end{document}